\theoremstyle{plain}
\newcommand{\sk}{s_k(G)}
\newcommand{\ak}{a_k(G)}
\newcommand{\CCk}{\mathcal{C}_k}
\newcommand{\FF}{\mathcal F}
\newtheorem{theorem}{Theorem}
\newtheorem{thm}[theorem] {Theorem}
\newtheorem{lemma}[theorem]{Lemma}
\newtheorem{corollary}[theorem]{Corollary}
\newtheorem{conj}[theorem]{Conjecture}
\title[Coloring of Graphs Avoiding  Bicolored Paths of a Fixed Length]
{Coloring of Graphs Avoiding  Bicolored Paths}
\author[Alaittin K\i rt\i \c{s}o\u{g}lu]{Alaittin K\i rt\i \c{s}o\u{g}lu}
\address{Hacettepe University, Department of Mathematics, Beytepe 06810 Ankara, Turkey. \\
              \emph{Illinois Institute of Technology, Department of Applied Mathematics,  Chicago, IL 60616 USA.} }
\email{alaittinkirtisoglu@gmail.com}
\author[L. \"Ozkahya]{Lale \"Ozkahya}
\address{Hacettepe University, Department of Computer Engineering, Beytepe 06810 Ankara, Turkey.}
\email{lale.ozkahya@gmail.com}
\date{\today}
\begin{document}

\maketitle

\begin{abstract}
The problem of finding the minimum number 
of colors to color a graph properly without containing 
any bicolored copy of a fixed family of subgraphs has been 
widely studied. 
Most well-known examples are star coloring 
and acyclic coloring of graphs (Gr\"unbaum, 1973) 
where bicolored copies of $P_4$ and cycles are not allowed, 
respectively. 
In this paper, we introduce a variation of these problems and 
study proper coloring of graphs not containing a bicolored path 
of a fixed length and provide general bounds for all graphs.  
A $P_k$-coloring of an undirected graph $G$ is a proper vertex
coloring of $G$ such that there is no bicolored copy of $P_k$ in $G,$ 
and the minimum number of colors needed for a $P_k$-coloring 
of $G$ is called the {\it $P_k$-chromatic number} of $G,$ 
denoted by $s_k(G).$ 
We provide bounds on $s_k(G)$ for all graphs, in particular, 
proving that for any graph $G$ with maximum degree $d\geq 2,$ and 
$k\geq4,$ $s_k(G)\leq \lceil 6\sqrt{10}d^{\frac{k-1}{k-2}} \rceil.$
Moreover, we find the exact values for the $P_k$-chromatic number 
of the products of some cycles and paths for $k=5,6.$  
\keywords{star coloring \and acyclic coloring \and bicolored path \and cartesian products of graphs}
\end{abstract}

\section{Introduction} \label{sec:introduction}

The proper coloring problem on graphs seeks to find colorings on 
vertices with minimum number of colors 
such that no two neighbors receive the same color. 
There have been studies introducing additional conditions to 
proper coloring, such as forbidding 2-colored copies 
of some graphs. In particular, 
{\it star coloring} problem on a graph $G$ 
asks to find the minimum number of colors in a proper 
coloring forbidding a 2-colored $P_4,$ called the star-chromatic 
number $\chi_s(G)$~\cite{grunbaum1973acyclic}. 
Similarly, {\it acyclic chromatic number} of a graph $G$, $a(G)$, 
is the minimum number of colors used in a proper coloring 
not having any 2-colored cycle, also called acyclic coloring of $G$
~\cite{grunbaum1973acyclic}. Both, the star coloring and 
acyclic coloring problems are shown to be NP-complete 
Albertson et al.~\cite{albertson2004coloring} and Kostochka~\cite{kostochka1978upper}, respectively. 
These two problems have been studied widely on many different families 
on graphs such as product of graphs, particularly grids and hypercubes. 

Acyclic coloring was also introduced in 1973 by Gr\"{u}nbaum \cite{grunbaum1973acyclic} who proved that a graph with maximum degree 3 has an acyclic coloring with 4 colors. 
Alon et al.~\cite{alon1991acyclic} prove that there exist graphs $G$ 
with maximum $d$ for which $ a(G)=\Omega\left(\frac{d^{\frac{4}{3}}}{{(logd)}^\frac{1}{3}}\right).$ 
In ~\cite{alon1991acyclic}, it is also shown that for any graph $G$ 
with maximum degree $d,$ $a(G)= O(d^\frac{4}{3}).$ 
Recently, there have been some improvements in the constant factor of the 
upper bound in~\cite{esperet2013acyclic,gonccalves2014entropy,ndreca2012improved}  
by using the entropy compression method. 
Similar results for the star chromatic number of graphs 
are obtained by Fertin et al.~\cite{fertin2004star}, 
showing $\chi_s(G)\leq \lceil 20d^{3/2} \rceil$ 
for any graph $G$ with maximum degree $d.$ 

Moreover, in \cite{aravind2011bounds,aravind2013forbidden} and \cite{gonccalves2014entropy}, more general bounds are 
shown introducing the chromatic number of {\it (2,$\FF$)-subgraph coloring}, defined as 
a proper coloring of the vertex set such that there is no bicolored copy 
of any subgraph $H\in \FF.$ The minimum number of colors needed 
to obtain a  $(2,\FF)$-subgraph coloring of a graph $G$ is denoted 
by $\chi_{2,\FF}(G).$ In particular, $\chi_{2,\FF}(\Delta)$ is used to denote 
the maximum value of $\chi_{2,\FF}(G)$ for any graph $G$ with maximum 
degree $\Delta.$ 
Aravind and Subramanian \cite{aravind2011bounds} show a lower bound on $\chi_{2,\FF}(\Delta)$ for the case $\FF = \{H\},$ where $H$ is a connected bipartite graph with $m$ edges ($m \geq 2$) as 
\[
\chi_{2,\{H\}}(\Delta) = \Omega 
\left( 
\frac{\Delta^{\frac{m}{m-1}}}{(\log \Delta )^{\frac{1}{m-1}}}
\right). 
\] 
This yields the same lower bound for $\chi_{2,\FF}(\Delta)$ for any family $\FF$ of connected bipartite graphs, where
$m$ is the minimum number of edges in any member of $\FF.$ For any such family $\FF,$ 
Aravind and Subramanian \cite{aravind2013forbidden}
prove that 
\[
\chi_{2,\FF}(\Delta) \leq C\Delta^{\frac{m}{m-1}},
\]
where $C$ is some constant depending only on $\FF$. 
Gon\c{c}alves et al. \cite{gonccalves2014entropy} improve this result introducing the parameters 
$k_v^{\leq m}$ and $k_e^m$, denoting the number of members in $\FF$ 
with at most $m$ vertices and with exactly $m$ edges, respectively.   
They show
\begin{equation}\label{gonc1}
\chi_{2,\FF}(\Delta) \leq (k_v^{\leq m}+71)(m+1)\Delta^{\frac{m}{m-1}}   
\end{equation}
\begin{equation}\label{gonc2}
\chi_{2,\FF}(\Delta) \leq (k_e^m+1+o(1))(m+1)\Delta^{\frac{m}{m-1}}   
\end{equation}

In this paper, we study a variation of the star-coloring problem and a special case of the 
$(2,\FF)$-subgraph coloring problem described above.  
We consider proper colorings of graphs not containing a bicolored (2-colored) 
path of a fixed length. 
We call a proper vertex
coloring of a graph $G$ without a bicolored copy of $P_k$ 
a {\it $P_k$-coloring} of $G$, where $k\geq 4$.  
The minimum number of colors needed for a $P_k$-coloring 
of $G$ is called {\it $P_k$-chromatic number} of $G,$ 
denoted by $s_k(G).$ 
A special case of this coloring is the 
star coloring, when $k=4,$ introduced by Gr\"unbaum \cite{grunbaum1973acyclic}. 
Hence, $\chi_S(G)=s_4(G)$ and all of the bounds on $s_k(G)$ 
in Section~\ref{sec:generalities} apply to star chromatic number using $k=4.$

We also study proper colorings that avoid bicolored long cycles. 
In particular, we let $\CCk=\{C_i: i\geq k\}$ 
and call a proper coloring of $G$ without any bicolored 
member from $\CCk$ a {\it $C_k$-coloring} of $G,$ 
and minimum number of colors needed for such a coloring 
{\it $C_k$-chromatic number} of $G,$ denoted by $a_k(G).$

Our results comprise lower bounds on these colorings and an upper bound 
for general graphs. 
In Section~\ref{sec:generalities}, we provide general 
bounds on $\sk$ and $\ak$ for any graph $G.$ 
In particular, our result presented in Theorem \ref{upperboundofP_k} improves on the bounds in \eqref{gonc1} and \eqref{gonc2}, for the case when $\FF=\{P_k\}$ for a fixed $k$. 
In Section~\ref{sec:product-coloring}, we present 
exact results on the $P_5$-coloring and $P_6$-coloring 
for the products of some paths and cycles. 
Finally, in Section \ref{sec:conc}, we conclude with some open problems.

\section{General Bounds} \label{sec:generalities}

We obtain lower 
bounds on $s_k(G)$ and $a_k(G)$ by using 
a theorem of Erd{\H o}s and Gallai below. 
\begin{thm} \cite{erdHos1959maximal} \label{ErdosPkCk}
For a graph $G$ on $n$ vertices, if the number of edges is more than 
\begin{enumerate}
    \item 
    $\frac{1}{2}(k-2)n$,  
 then $G$ contains $P_k$ as a subgraph,
 \item
  $\frac{1}{2}(k-1)(n-1)$, 
then $G$ contains a member of $\CCk$ as a subgraph,
\end{enumerate}
for any $P_k$ with $k\geq 2$, and for any $\CCk$ with $k\geq 3.$
\end{thm}
Note that in a $P_k$-coloring of a graph, 
the subgraphs induced by any two color classes are $P_k$-free. 
Using this observation together with Theorem~\ref{ErdosPkCk}, 
we obtain the results in Theorems~\ref{lowerboundofP_k} and 
\ref{theoremC_k}. 
\begin{thm} \label{lowerboundofP_k}
For any graph $G=(V,E)$, let $|V|=n$ and $|E|=m$. Then, $s_k(G) \geq \frac{2m}{n(k-2)}+1$, for any $k\geq 3$.
\end{thm}
\begin{proof}
Let $s_k(G)=x$ and for $1 \leq i \leq x$, $V_i$ be the set of vertices whose color is $i$ in a $P_k$-coloring of G using $x$ colors. By the definition of $P_k$-coloring, the subgraph of $G$ induced by $V_i \cup V_j$ does not contain $P_k$ for any $i,j$. Let $E_{i,j}$ be the set of edges covered by vertices in the set $V_i \cup V_j$ with $1 \leq i < j \leq x$. Then,
\begin{align*}
  |E_{i,j}| \leq \frac{1}{2}(k-2)|V_i \cup V_j|,
\end{align*}
by Theorem \ref{ErdosPkCk}. For two distinct pairs $(i_1,j_1)$ and $(i_2,j_2)$ with $1 \leq i_1 < j_1 \leq x$ and $1 \leq i_2 < j_2 \leq x$, $E_{i_1,j_1} \cap E_{i_2,j_2}= \emptyset$. It also holds that
\begin{align*}
    \sum_{(i,j)}|E_{i,j}|=m \leq \sum_{(i,j)} \frac{1}{2}(k-2)(|V_i|+|V_j|)\leq 
 \frac{1}{2}(k-2)(x-1) \sum_{\ell=1}^{x} V_\ell  = \frac{n}{2}(k-2)(x-1).    
\end{align*}
The last inequality follows from the fact that 
each color $1\leq \ell \leq x$ appears $(x-1)$ times over pairwise distinct pairs of colors and $\sum_{\ell=1}^{x} V_\ell=n$. 
\end{proof}

As there are graphs such as the $d$-dimensional hypercube $Q_d,$ 
for which $\lceil \frac{d+3}{2}\rceil \leq s_4(Q_d)\leq d+1$ 
(\cite{fertin2004star}), 
the lower bound above is asymptotically correct in terms of the maximum degree $d$. 
However, we suspect that the coefficient $\frac{1}{k-2}$ can be improved.

\begin{thm}\label{theoremC_k}
For any graph $G=(V,E)$, let $|V|=n$, $|E|=m$ and 
$\Delta=4n(n-1)-\frac{16m}{k-1}+1$, where $\Delta=\Theta(n^2)$ for $k\geq 4,$ and  $\Delta=\Theta(\binom{n}{2}-m)$ for $k=3$. 
Then, $a_k(G) \geq \frac{1}{2}(2n+1-\sqrt{\Delta})$, for any $k\geq 3$. 
\end{thm}
\begin{proof}
Let $a_k(G)=x$ and for $1 \leq i \leq x$, $V_i$ be the set of vertices whose color is $i$ in a $k$-acyclic coloring of $G$ using $x$ colors. 
By the same ideas as in the proof of Theorem~\ref{lowerboundofP_k} and 
by Theorem \ref{ErdosPkCk}, we have
\begin{align*}
m \leq \sum_{(i,j)}
\frac{1}{2}(k-1)(|V_i|+|V_j|-1) = 
\frac{k-1}{4}[2n(x-1)-x(x-1)], 
\end{align*}
which gives 
$0 \geq x^2-(2n+1)x +(2n+\frac{4m}{k-1}).$
Let $\Delta = 4n^2-4n-\frac{16m}{k-1}+1.$ 
We have $\Delta\geq 1,$ since 
$k \geq 3$ and  $m \leq \frac{n(n-1)}{2}$. 
Thus, we have $x\geq \frac{1}{2}(2n+1-\sqrt{\Delta}).$

Note that $\Delta$ can also be written as 
$\Delta= 8(\binom{n}{2}-m)+8m(1-\frac{2}{k-1})+1.$ Since $\max\{\binom{n}{2}-m,m\} \geq \frac{1}{2}\binom{n}{2},$ asymptotically we have $\sqrt{\Delta}=\Theta(n)$ for $k\geq 4.$ When $k=3$, the second term disappears and $\Delta$ is of the order of $(\binom{n}{2}-m),$ the number of nonedges in $G.$
\end{proof}


In the following, we prove a general upper bound 
for the $P_k$-chromatic number of graphs. 
This coloring is also studied in \cite{esperet2013acyclic} 
as {\it star k} coloring, where a bicolored $P_{2k}$ is avoided 
in a proper vertex coloring, considering only paths of even order. 
Esperet et al.~\cite{esperet2013acyclic} show that 
every graph with maximum degree $\Delta$ has a star $k$ coloring 
($P_{2k}$-coloring) with at most $c_kk^\frac{1}{k-1}\Delta^\frac{2k-1}{2k-2}+
\Delta$ colors, where $c_k$ is a function of $k.$ 
Our result in Theorem \ref{upperboundofP_k} improves this result slightly and generalizes it to the paths of all lengths.    
As discussed earlier, there are various methods such as 
Rosenfeld Counting method that can be used in 
proving upper bounds on such chromatic numbers \cite{rosenfeld2020another}. 
Our proof relies on Lovasz Local Lemma. 

An event $A_i$ is \textit{mutually independent} of a set of events $\{B_i \mid i=1,2...,n\}$ if for any subset $\mathcal{B}$ of events or their complements contained in $\{B_i\}$, we have $Pr[A_i \mid \mathcal{B}]=Pr[A_i]$.
Let $\{A_1,A_2,...,A_n\}$ be events in an arbitrary probability space. A graph $G=(V,E)$ on the set of vertices $V=\{1,2,...,n\}$ is called a \textit{dependency graph} for the events $A_1,A_2,...,A_n$ if for each i, $1 \leq i \leq n$, the event $A_i$ is mutually independent of all the events $\{A_j \mid (i,j) \notin E\}$. 
\begin{thm}[General Lovasz Local Lemma] \label{GeneralLovaszLocal} \cite{erdHos1973problems}
 Suppose that $H=(V,E)$ is a dependency graph for the events $A_1,A_2,...,A_n$ and suppose there are real numbers $y_1,y_2,...,y_n$ such that $0 \leq y_i < 1$ and 
\begin{equation}\label{eq:LLL}
 Pr[A_i] \leq y_i \prod_{(i,j) \in E} (1-y_j)
 \qquad \text{for all $1 \leq i \leq n$.}
 \end{equation}
 Then $Pr[\bigwedge_{i=1}^n \Bar{A_i}] \geq \prod_{i=1}^n (1-y_i)$. In particular, with positive probability no event $A_i$ holds.
\end{thm}

We use Theorem~\ref{GeneralLovaszLocal} in the proof of the following upper bound.  
 \begin{thm}\label{upperboundofP_k}
Let G be any graph with maximum degree d. Then $s_k(G) \leq \lceil 6\sqrt{10}d^{\frac{k-1}{k-2}} \rceil$, for any $k\geq4$ and $d\geq2$. 
\end{thm}
\begin{proof}
Assume that $x=\lceil ad^{\frac{k-1}{k-2}} \rceil$ and $a=6\sqrt{10}$. Let $f:V \mapsto  \{1,2,...,x\}$ be a random vertex coloring of $G$, where for each vertex 
$v \in V$, the color $f(v) \in \{1,2,...,x\}$ is  chosen uniform randomly. It suffices to show that with positive probability $f$ does not produce a bicolored $P_k.$ 

Below are the types of probabilistic events that are not allowed:
\begin{itemize}
    \item {Type I:} For each pair of adjacent vertices $u$ and $v$ of $G$, let $A_{u,v}$ be the event that $f(u)=f(v)$. 
    \item {Type II:} For each $P_k$ called $P$, let $A_P$ be the event that $P$ is colored properly with two colors. 
\end{itemize}
By definition of our coloring, none of these events are allowed to occur. 
We construct a dependency graph $H$, where the vertices are the events of Types I and II, 
and use Theorem \ref{GeneralLovaszLocal} to show that with positive probability 
none of these events occur. 
For two vertices $A_1$ and $A_2$ to be adjacent in $H,$ the subgraphs corresponding to these 
events should have common vertices in $G.$
The dependency graph of the events is called $H,$ where  
the vertices are the union of the events. 
We call a vertex of $H$ {\it of Type i} if it corresponds to an event of Type i. 
For any vertex $v$ in $G$, there are at most 
\begin{itemize}
\item $d$ pairs $\{u,v\}$ associated with an event of Type I, and 
\item $\frac{k+1}{2}d^{k-1}$ copies of $P_k$ containing $v$, associated with an event of Type II.
\end{itemize}
The first observation follows from the fact that $\Delta(G)=d.$ 
To see the second observation, let us label the vertices of a $P_k$ 
containing $v$ as $x_1,x_2,...,x_k.$ 
The maximum number of $P_k$'s with $x_i=v$ is $d^{k-1}.$ 
Hence, there are at most $\lceil \frac{k}{2}\rceil d^{k-1}$ copies 
of $P_k$ containing $v$.  
\begin{lemma}
The $(i,j)^{th}$ entry of the following matrix shows an upper bound 
on the number of vertices of type $j$ that are adjacent to 
a vertex of type $i$ in $H.$
\begin{center}
\[ 
\begin{array}{|c|c|c|} 
\hline
\empty & I & II \\
\hline
I & 2d  & (k+1)d^{k-1}\\ 
\hline
II & kd & {\frac{k}{2}}(k+1)d^{k-1}\\
\hline
\end{array} 
\] 
\end{center}
\end{lemma}
Consider a vertex $A_{u,v}$ in $H$ for the 
first row. 
Since this vertex may be adjacent to events 
$A_{u,z}$ and $A_{v,x}$ for some $x,z\in V(G),$  there are at most $2d$ such events of Type I. 
Similarly, $A_{u,v}$ may be adjacent 
to events $A_P,$ where $P$ is a $P_k$ containing $u$ or $v.$ There are at most $(k+1)d^{k-1}$ 
such events. 
For the second row, a path $P$ that is a copy of $P_k$ may have $kd$ vertices that are 
adjacent to some vertex of $P.$  
Similarly, there may be at most $(k+1)d^{k-1}/2$ other $P_k$'s containing 
some particular vertex of $P.$ 
The probabilities of the events are 
 \begin{itemize}
     \item $Pr(A_{u,v})=\frac{1}{x}$ for an  event of type I, and   
     \item $Pr(A_P)=\frac{1}{x^{k-2}}$ for an  event of type II.     
 \end{itemize}

To apply Theorem \ref{GeneralLovaszLocal}, 
we choose the weights $y_i$ as below:
 \begin{align*}
     y_1=\frac{1}{3d}, \qquad y_2=\frac{1}{2(k+1)d^{k-1}}.
 \end{align*}
Below are the conditions that are to be satisfied for~\eqref{eq:LLL} to hold. 
  \begin{equation} \label{ineq1}
        \frac{1}{x} \leq \frac{1}{3d}\left (1-\frac{1}{3d} \right)^{2d} \left(1-\frac{1}{2(k+1)d^{k-1}}\right)^{(k+1)d^{k-1}} 
  \end{equation}
    
    \begin{equation} \label{ineq2}
      \frac{1}{x^{k-2}} \leq \frac{1}{2(k+1)d^{k-1}} \left(1-\frac{1}{3d}\right)^{kd} \left(1-\frac{1}{2(k+1)d^{k-1}}\right)^
      {\frac{k}{2}(k+1)d^{k-1}} 
    \end{equation}
Since $(1+z)^n\geq 1+nz$ for $z\geq -1$ and 
any nonnegative integer $n$, 
it is sufficient to verify the following to satisfy~\eqref{ineq1}, and   
we observe that it holds when $a=6\sqrt{10}\geq 18$ and $k\geq 4.$ 
\begin{align*}
    \frac{1}{ad^\frac{k-1}{k-2}} \leq \frac{1}{3d}\left(1-\frac{2d}{3d}\right)\left(1-\frac{(k+1)d^{k-1}}{2(k+1)d^{k-1}}\right) =  \frac{1}{18d}
\end{align*}

We can rewrite~\eqref{ineq2} as below.  
\begin{align*}
      \frac{1}{a} \leq 
      \left (\frac{1}{2(k+1)}\right)^\frac{1}{k-2}
      \left(1-\frac{1}{3d}\right)^\frac{kd}{k-2}
\left(1-\frac{1}{2(k+1)d^{k-1}}\right)^
{\frac{k(k+1)d^{k-1}}{2(k-2)}} 
\end{align*}
Similarly, it is sufficient to verify the following to satisfy~\eqref{ineq2}. We omit the use of ceiling for simplicity.  
\begin{align*}
      \frac{1}{a} \leq 
      \left (\frac{1}{2(k+1)} \right)^{1/{k-2}}
      \left(1-\frac{k}{3(k-2)}\right) \left(1-\frac{k}{4(k-2)}\right)
\end{align*}
Since all factors on the right are decreasing for $k\geq 4$, ~\eqref{ineq2} is verified. 
\end{proof}

\section{Coloring of Products of Paths and Cycles} \label{sec:product-coloring}
The \textit{cartesian product} of two graphs $G=(V,E)$ and $G'=(V',E')$ is shown 
by $G \square G'$ and its vertex set is $V \times V'$. 
For any vertices $x,y \in V$ and $x',y' \in V'$, there is an edge between 
$(x,y)$ and $(x',y')$ in $G \square G'$ if and only if 
either $x=y$ and $x'y' \in E'$ or $x'=y'$ and $xy \in E$. 
For simplicity, we let $G(n,m)$ denote the product $P_n\square P_m.$ 

The star chromatic number and acyclic chromatic number of products 
of graphs have been widely studied. 
Fertin et al.~\cite{fertin2004star} find various bounds on the star chromatic number 
of graph families such as hypercube, grid, tori, providing exact results for 2-dimensional grids. 
More recent results on the acyclic coloring of grid and tori are presented by  
Akbari et al.\cite{akbari2019star} and Han et al.~\cite{han2016star}. 
Jamison et al.~\cite{jamison2008acycliccycles,jamison2008acyclic,jamison2006acyclic} investigate the acyclic chromatic number for products of trees, 
products of cycles and Hamming graphs. 
However, these numbers are still not known in general for the  products of several cycles. 

In this section, we present results on $P_5$-coloring and $P_6$-coloring of products of paths and cycles of various lengths. We first present Theorem \ref{skG33C33}, which helps us to prove more general cases by providing a lower bound.  

\begin{thm} \label{skG33C33}
\[
s_5(P_3 \square P_3)=
s_5(C_3\square C_3) = 
s_5(C_3\square C_4) = 
s_5(C_4\square C_4) = 4.
\]
\end{thm}
\begin{proof} 
We start by showing that $s_5(P_3 \square P_3)\geq 4.$
Assume that there is a coloring of 
$P_3 \square P_3$ using colors $\{a,b,c\}$ only. We want to show that each color appears exactly twice in any consecutive columns. Note that each color appears at most 3 times in consecutive columns. If a color, say $a$, appears 3 times, then by the pigeonhole principle,  a color, say $c$, appears exactly once on these consecutive columns. In this case, the vertices colored $a$ and $b$ contain a bicolored $P_5$.  Hence, each color is used exactly twice and all colors appear in any consecutive columns. 

Without loss of generality, suppose that $a$ 
is used twice in a column.  
Then, in a consecutive column, either $b$ or $c$ is used twice, which is impossible in a proper 
coloring using $\{a,b,c\}$ only. 
Thus, each column has colors $a,b,c$ exactly once. This implies that any coloring of 
$P_3\square P_3$ using three colors has 
to be colored as shown below.  
According to this property, if 
the vertex at the center of 
$P_3\square P_3$ has, say color $a,$  
then some pair of vertices at opposing corners 
have color $a$ as well. 
As we see in~\eqref{C3xC3}, however 
the remaining vertices are colored, 
there is always a bicolored $P_5$, thus  
$s_5(P_3 \square P_3)\geq 4.$
\begin{equation}\label{C3xC3}
\begin{matrix}   
    a & b & c\\
    c & a & b\\
    b & c & a
\end{matrix}    
\qquad \qquad \qquad
\begin{matrix}
1 & 2 & 3 & 4\\
2 & 1 & 4 & 3\\
3 & 4 & 1 & 2\\
4 & 3 & 2 & 1
\end{matrix} 
\end{equation}
Since $C_3 \square C_3$, $C_3 \square C_4$ and 
$C_4 \square C_4$ contain $P_3 \square P_3$ as a subgraph, this shows that at least 4 colors 
are needed to color these graphs.  
Such a coloring can be obtained as  
in~\eqref{C3xC3} by 
taking the first three or four rows/columns 
depending on the change in the grid dimension. 
\end{proof}

\begin{thm} \label{s_5(G(n,m))}
$s_5(G(n,m))=4$ for all $n,m \geq 3$.
\end{thm}
\begin{proof}
Note that $4=s_5(G(3,3)) \leq s_5(G(n,m))$ for all $m,n \geq 3.$ Since there exists some integer $k$ for which $3k \geq n,m$ and $G(n,m)$ is a subgraph of $G(3k,3k)$, $s_5(G(n,m)) \leq s_5(G(3k,3k))$ for some $k$. Hence, we show 
that $s_5(G(3k,3k))=4$. 
In Theorem~\ref{skG33C33}, a $P_5$-coloring of $C_3\square C_3$ is 
given by the upper left corner of the coloring in~\eqref{C3xC3} 
by using 4 colors. 
By repeating this coloring of $C_3\square C_3$ $k$ times in $3k$ rows, we obtain 
a coloring of $G(3k,3).$ 
Then repeating this colored $G(3k,3)$  $k$ times in $3k$ columns, 
we obtain a $P_5$-coloring of $G(3k,3k)$ using 4 colors. 
There exists no bicolored $P_5$ in this coloring.  
\end{proof}

In the following, we finally generalize our results to the product of all cycles with some exception. We make use 
of the well-known result below. 
\begin{thm} [Sylvester, \cite{sylvester1884mathematical}]\label{sylvester}
If $r, s > 1$ are relatively prime integers, 
then there exist $\alpha , \beta \in \mathbb{N}$ such that 
$t=\alpha r + \beta s$ for all $t \geq (r - 1)(s -1)$. 
\end{thm}

\begin{thm} \label{cixcj}
Let $p,q \geq 3$ and $p,q \neq 5$. Then $s_5(C_p \square C_q)=4$.
\end{thm}
\begin{proof}
The lower bound follows from Theorem \ref{skG33C33}. 
By Theorem \ref{sylvester},  $p$ and $q$ can be written as a linear combination of 3 and 4 
using nonnegative coefficients.  
By using this, we are able to tile the $p\times q$-grid of 
$C_p\square C_q$ using these 
blocks of $3\times 3,$ $3\times 4,$ $4\times 3,$ and 
$4\times 4$ grids. 
Recall that the coloring pattern in \eqref{C3xC3} also provides a  
$P_5$-coloring of smaller grids listed above by using 
the upper left portion for the required size. 
Therefore, using these coloring patterns on the  
smaller blocks of the tiling yields a $P_5$-coloring of $C_p\square C_q.$
\end{proof}

\begin{corollary} \label{pixcj}
Let $i,j \geq 3$ and $i,j \neq 5$. Then, $s_5(P_i \square C_j)=4$. 
\end{corollary}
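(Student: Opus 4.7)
The plan is to obtain both bounds purely from monotonicity of $s_5$ under taking subgraphs, leveraging the two previous theorems. The key observation is that if $H$ is a subgraph of $G$ and $c$ is a $P_5$-coloring of $G$, then $c$ restricted to $V(H)$ is still proper (edges of $H$ are edges of $G$) and contains no bicolored $P_5$ (any $P_5$ in $H$ is a $P_5$ in $G$). Hence $s_5(H)\leq s_5(G)$.

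For the lower bound, I would note that $P_3$ is a subgraph of both $P_i$ (for $i\geq 3$) and $C_j$ (for $j\geq 3$), so $P_3\square P_3$ embeds as a subgraph of $P_i\square C_j$. By Theorem~\ref{skG33C33}, $s_5(P_3\square P_3)=4$, and by the monotonicity observation above we obtain $s_5(P_i\square C_j)\geq 4$.

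For the upper bound, I would observe that $P_i$ is obtained from $C_i$ by deleting a single edge, so $P_i\square C_j$ is a spanning subgraph of $C_i\square C_j$. Since $i,j\geq 3$ and $i,j\neq 5$, Theorem~\ref{cixcj} gives $s_5(C_i\square C_j)=4$, and monotonicity yields $s_5(P_i\square C_j)\leq 4$.

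There is essentially no obstacle here: the result is immediate from the two preceding theorems once the subgraph monotonicity of the $P_5$-chromatic number is stated. The only minor care is to confirm that the hypothesis $i,j\neq 5$ is inherited from Theorem~\ref{cixcj}, which it is, since the corollary assumes precisely that.
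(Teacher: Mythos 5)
Your proof is correct and follows essentially the same route as the paper: the lower bound via an embedded grid (the paper uses $P_i\square P_j$ with Theorem~\ref{s_5(G(n,m))}, you go directly to $P_3\square P_3$ with Theorem~\ref{skG33C33}, which amounts to the same monotonicity argument), and the upper bound via $P_i\square C_j\subseteq C_i\square C_j$ with Theorem~\ref{cixcj}. Your explicit statement of subgraph monotonicity of $s_5$ is a welcome touch the paper leaves implicit.
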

\begin{proof}
Since $P_i \square P_j$ is a subgraph of $P_i \square C_j,$ 
Theorem~\ref{s_5(G(n,m))} together with Theorem~\ref{cixcj} yields this result. 
\end{proof}

Finally, we make use of the $P_5$-chromatic numbers found earlier to 
prove similar results for $P_6$-coloring as well, since every $P_5$-coloring is also a $P_6$-coloring. 
\begin{thm} \label{s6G44}
$s_6(G(4,4))=4.$ 
\end{thm}
\begin{proof}
Since $s_6(G(4,4)) \leq s_5(G(4,4))=4$, 
we prove $s_6(G(4,4))\geq 4$. Assume that $f$ 
is a coloring of $G(4,4)$ using 
the colors $\{1,2,3\}$ only. We consider 
possible colorings on the $C_4$ at the center 
of the grid, call it $C.$ 
\paragraph{Case 1: $C$ is bicolored.} 
\begin{figure}[htbp]
    \centering
    \includegraphics[scale=0.8]{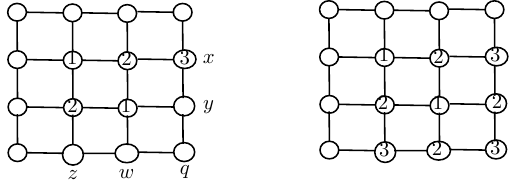}
    \caption{Possible colorings in Case 1.}
    \label{fig11_LLL}
\end{figure} 
Assume that $C$ has only two colors, 1 and 2. 
Then, either $x$ or $y$ shown in Figure~\ref{fig11_LLL} has color 3. 
Without loss of generality, assume that $f(x)=3.$ This implies $f(y)=2$. 
To avoid a bicolored $P_6,$ we have $f(q)=3.$ 
This implies that $f(w)=2$, and therefore $f(z)=3$ so that $V(C) \cup \{z,w\}$ is not bicolored. However, this yields a bicolored $P_7$ as seen in Figure \ref{fig11_LLL}.
 
\paragraph{Case 2: $C$ has all three colors.} 
We assume that the repeating color on $C$ is 1.

\paragraph{ Case 2.a: Color 1 is also used on the pair of vertices in opposing corners as in Figure~\ref{fig12_LLL}.}

Note that $x$ and $y$ cannot have the same color, otherwise there is a bicolored $P_6.$  Same holds for $w$ and $z.$ 
Hence, both 2 and 3 appear as colors on 
the pairs $\{x,y\}$ and $\{w,z\},$ 
yielding a bicolored $P_6.$
\begin{figure}[h!]
  \centering
  \begin{subfigure}[b]{0.33\linewidth}
  \centering
       \includegraphics[scale=0.8]{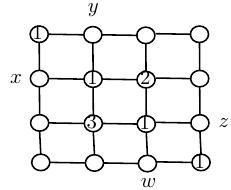}
    \caption{}
    \label{fig12_LLL}
  \end{subfigure}
  \begin{subfigure}[b]{0.62\linewidth}
  \centering
  \includegraphics[scale=0.8]{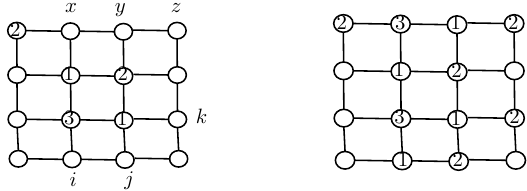}
 \caption{}
 \label{fig13_LLL}
  \end{subfigure}
  \caption{Possible colorings in Case 2.}
  \label{fig_gen}
\end{figure}
\paragraph{Case 2.b: Color 1 is not used on both of the vertices in opposing corners as in Figure~\ref{fig12_LLL}.} Without loss of generality, assume that one of the vertices at the corners is colored 2 as in Figure \ref{fig13_LLL}. 
This case is also symmetric to the case when this color is 3. 
This implies that $f(x)=3$ and $f(y)=1$ yielding  a bicolored $P_5$. To avoid a bicolored (with colors 1 and 3) $P_6$, it is necessary that $f(j)=f(k)=f(z)=2$. However, this produces a  bicolored $P_6$ seen in Figure \ref{fig13_LLL}. 
\end{proof}

\begin{corollary}\label{s_6(G(m,n))}
$s_6(G(n,m))=4$ for all $n,m \geq 4$.
\end{corollary}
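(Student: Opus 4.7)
The plan is to establish Corollary~\ref{s_6(G(m,n))} by sandwiching $s_6(G(n,m))$ between two matching bounds that follow directly from results already in hand. Both bounds are essentially monotonicity arguments, so there is no real combinatorial obstacle to overcome.

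For the upper bound, I would first record the general fact that $s_6(G) \leq s_5(G)$ for every graph $G$: any $P_5$-coloring is automatically a $P_6$-coloring, because a bicolored $P_6$ would contain a bicolored $P_5$ as a subpath, contradicting the assumption. Applying this to $G(n,m)$ and invoking Theorem~\ref{s_5(G(n,m))} (which asserts $s_5(G(n,m)) = 4$ for all $n,m \geq 3$), we obtain $s_6(G(n,m)) \leq 4$ whenever $n,m \geq 3$, which in particular covers our range $n,m \geq 4$.

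For the lower bound, note that $P_k$-chromatic number is monotone under taking subgraphs: if $H \subseteq G$, then the restriction of any $P_k$-coloring of $G$ to $V(H)$ is a $P_k$-coloring of $H$ (any bicolored $P_k$ in $H$ would be a bicolored $P_k$ in $G$), hence $s_k(H) \leq s_k(G)$. For $n,m \geq 4$, the graph $G(4,4)$ sits inside $G(n,m)$ as a subgraph, so by Theorem~\ref{s6G44} we get $s_6(G(n,m)) \geq s_6(G(4,4)) = 4$.

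Combining the two bounds yields $s_6(G(n,m)) = 4$ for all $n,m \geq 4$. The only step requiring any thought is the inequality $s_6(G) \leq s_5(G)$, and even that is immediate from the subpath containment $P_5 \subseteq P_6$; there is no genuine obstacle here, which is consistent with the statement being labeled a corollary rather than a theorem.
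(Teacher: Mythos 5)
Your proof is correct and follows the same route as the paper: the upper bound $s_6(G(n,m))\leq s_5(G(n,m))=4$ via Theorem~\ref{s_5(G(n,m))}, and the lower bound from the subgraph $G(4,4)$ together with Theorem~\ref{s6G44}. You merely spell out the two monotonicity facts ($s_6\leq s_5$ and monotonicity under subgraphs) that the paper leaves implicit.
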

\begin{proof}
Theorem \ref{s_5(G(n,m))} together with  Theorem~\ref{s6G44} imply this result. 
\end{proof}

\begin{corollary}\label{cor_s6_cmcn}
$s_6(C_m \square C_n)=4$ for all $m,n \geq 4$ and $m,n \neq 5$.
\end{corollary}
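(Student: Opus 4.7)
My plan is to reduce this corollary immediately to the two previous results -- the $P_5$-coloring Theorem~\ref{cixcj} and the base case Theorem~\ref{s6G44} -- by inserting one small monotonicity step.

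First, for the lower bound, I would observe that since $m,n\geq 4$, the toroidal grid $C_m\square C_n$ contains $P_4\square P_4 = G(4,4)$ as a subgraph: taking any four consecutive vertices on each of the two cycles, the induced subgraph is precisely $G(4,4)$. A $P_6$-coloring of $C_m\square C_n$ restricts to a $P_6$-coloring of this subgraph, so $s_6(C_m\square C_n)\geq s_6(G(4,4))=4$ by Theorem~\ref{s6G44}.

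For the upper bound, the key step is to record the monotonicity $s_6(H)\leq s_5(H)$, valid for every graph $H$: any bicolored $P_6$, being properly colored with only two colors, must have those colors alternating along its six vertices, so its initial $P_5$ is already bicolored in the same two colors, and therefore a $P_5$-coloring automatically avoids bicolored $P_6$'s. Plugging $H=C_m\square C_n$ into this inequality and citing Theorem~\ref{cixcj} (whose hypotheses $m,n\geq 3$ and $m,n\neq 5$ are implied by ours) yields $s_6(C_m\square C_n)\leq s_5(C_m\square C_n)=4$, and combining the two bounds gives the claimed equality. There is no real technical obstacle here; the only step of substance is recognising the monotonicity $s_6\leq s_5$, after which the corollary is a direct consequence of Theorems~\ref{s6G44} and~\ref{cixcj}.
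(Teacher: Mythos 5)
Your proof is correct and follows essentially the same route as the paper's: the upper bound via the monotonicity $s_6 \leq s_5$ combined with Theorem~\ref{cixcj}, and the lower bound via the subgraph $G(4,4)$ and Theorem~\ref{s6G44}. The only cosmetic difference is that you spell out why a $P_5$-coloring forbids bicolored $P_6$'s, which the paper leaves implicit under ``by the definition of $P_k$-coloring.''
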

\begin{proof}
By the definition of $P_k$-coloring and Theorem \ref{cixcj}, 
$s_6(C_m \square C_n) \leq s_5(C_m \square C_n)=4$ for all $m,n \geq 3$ and $m,n \neq 5$. 
Since $G(4,4)$ is a subgraph of $C_m \square C_n$ for all $m,n \geq 4$ and by Corollary~\ref{s_6(G(m,n))}, $s_6(C_m \square C_n) \geq s_6(G(4,4))=4$. 
\end{proof}

\section{Conclusions}\label{sec:conc}

In this paper, we introduce a variation of the star coloring 
problem by forbidding a family different than double stars 
to be bicolored, such as paths of a fixed length and cycles 
that are longer than some fixed integer $k.$ 
We discuss below some questions that remain open. 

We observe that $s_k(G(2,m))=3$ for all $m \geq 3$, $k\geq 5$, and 
$s_k(G(3,m))=3$ for all $m \geq 3$ and $k \geq 6$ 
by using the 3-colorings shown below. 
\begin{equation*}\label{eq:sk_G3m}
\begin{matrix}
1& 2 & 3& 1& 2 & 3&...\\
2 & 3 & 1& 2 & 3 & 1&...\\
\end{matrix}\qquad \qquad \qquad
\begin{matrix}
1 & 2 & 3 & 1 & 2 & 3 & \dots\\
2 & 3 & 1 & 2 & 3 & 1 & \dots\\
1 & 2 & 3 & 1 & 2 & 3 & \dots\\
\end{matrix}
\end{equation*} 
This coloring pattern having  
alternating rows repeating the patterns 123 and 231  
can be generalized to show that $s_k(G(k-3,m))=3$ for any $k\geq 6$ and $m\geq 3$.  
Note that in such colorings, a bicolored $P_k$ has to have at least $k-2$ vertices in the same column, thus cannot be found in $G(k-3,m)$ colored according to the pattern above. As we know by Theorem \ref{s_5(G(n,m))} and Corollary \ref{s_6(G(m,n))} that 
$s_5(G(n,m))=4$ for all $n,m \geq 3$, and 
$s_6(G(n,m))=4$ for all $n,m \geq 4$, it seems likely that this result can be generalized as in the following claim, which would settle all cases for $s_k(G(n,m)).$ 
\begin{conj}\label{conj_sk}
$s_k(G(k-2,k-2))=4$. Thus, $s_k(G(n,m))=4$ 
for all $n, m\geq k-2.$
\end{conj} 

It also remains as an open question, whether one can find coloring patterns as above, to show similarly that $s_k(C_{k-3}\square C_q))=3$ for any $k\geq 5$, $q\geq 3$, and prove a similar statement for $s_k(C_p\square C_q)$ as above.   

Finally, the upper bound in Theorem~\ref{upperboundofP_k} also holds for $a_k(G)$ of any graph $G,$ since $a_k(G)\leq s_k(G)$. It would be interesting to study whether this bound can be reduced further for $a_k(G).$

\bibliographystyle{amsplain}
\bibliography{arxiv_submission2023}
\end{document}